\documentclass[12]{amsart}
\usepackage{amsmath,amssymb,amsthm,color,enumerate,comment,centernot,enumitem,url,cite}
\usepackage{graphicx,relsize,bm}
\usepackage{mathtools}
\usepackage{array}

\makeatletter
\newcommand{\tpmod}[1]{{\@displayfalse\pmod{#1}}}
\makeatother

\newtheorem{thm}{Theorem}[section]
\newtheorem{lemma}[thm]{Lemma}

\newtheorem{cor}[thm]{Corollary}

\theoremstyle{remark}

\theoremstyle{definition}
    \newtheorem{defn}[thm]{Definition}

\theoremstyle{THM}

\newcommand{\G}{{\mathcal G}}

\newcommand{\RR}{{\mathcal R}}

\newcommand{\Mod}[1]{\ (\mathrm{mod}\enspace #1)}
\newcommand{\mmod}[1]{\ \mathrm{mod}\enspace #1}
\newcommand{\Z}{{\mathbb Z}}
\newcommand{\Q}{{\mathbb Q}}

\newcommand{\F}{{\mathbb F}}
\newcommand{\abs}[1]{\left|{#1}\right|}

\makeatletter
\@namedef{subjclassname@2020}{%
  \textup{2020} Mathematics Subject Classification}
\makeatother

\def\red#1 {\textcolor{red}{#1 }}
\def\blue#1 {\textcolor{blue}{#1 }}

\numberwithin{equation}{section}

\begin{document}

\title[Generalized Wieferich primes and monogenic trinomials]{Generalized Wieferich primes\\ and monogenic trinomials}

\author{Amy Falk}
\address{Department of Mathematics, Cedar Crest College, Allentown, Pennsylvania 18104, USA}
\email[Amy~Falk]{A.Falk341@cedarcrest.edu}

\author{Joshua Harrington}
\address{Department of Mathematics, Cedar Crest College, Allentown, Pennsylvania 18104, USA}
\email[Joshua~Harrington]{joshua.harrington@cedarcrest.edu\\ ORCID: 0000-0001-7433-3666}

\author{Lenny Jones}
\address{Professor Emeritus, Department of Mathematics, Shippensburg University, Shippensburg, Pennsylvania 17257, USA}
\email[Lenny~Jones]{doctorlennyjones@gmail.com\\ ORCID: 0000-0001-7661-4226}

\date{\today}

\begin{abstract}
Let $b\ge 2$ be an integer and let $p\ge 3$ be a prime. We say that $p$ is a {\em generalized Wieferich prime base $b$}, or more succinctly, a {\em base-$b$ Wieferich prime,} if $b^{p-1}\equiv 1 \pmod{p^2}$. When $b=2$, $p$ is also known simply as a Wieferich prime. Let $f(x)\in {\mathbb Z}[x]$ be a monic polynomial of degree $N\ge 2$. We say that $f(x)$ is monogenic if $f(x)$ is irreducible over ${\mathbb Q}$ and $\{1,\theta,\theta^2,\ldots,\theta^{N-1}\}$ is a basis for the ring of integers of ${\mathbb Q}(\theta)$, where $f(\theta)=0$. Recently, the third author proved that $x^{2p}+2x^p+2$ is monogenic if and only if $p$ is not a Wieferich prime. In this article, we generalize this result to % establish a generalization of this result for 
$x^{2n}+bx^n+b$ with certain restrictions on $b\ge 2$ and $n\ge 3$. %,  and $n\ge 3$ is not divisible by any base-$b$ Wieferich prime.
\end{abstract}

\subjclass[2020]{Primary  11A41, 11R04; Secondary 11A07, 11R09}
\keywords{Wieferich prime, monogenic, trinomial, Lucas sequence}

\maketitle
\section{Introduction}\label{Section:Intro}
For an integer $b\ge 2$ and a prime $p\ge 3$, we say that $p$ is a {\em generalized Wieferich prime base $b$}, or more succinctly, a {\em base-$b$ Wieferich prime}, if $b^{p-1}\equiv 1 \pmod{p^2}$  \cite{Conrad}. When $b=2$, the prime $p$ is also known simply as a {\em Wieferich prime}. Base-$b$ Wieferich primes are quite scarce. For example, the only known Wieferich primes are 1093 and 3511, while there are no known base-47 Wieferich primes \cite{Conrad,OEIS}. %1093 and 3511 are the only known Wieferich primes.

Let $f(x)\in {\mathbb Z}[x]$ be a monic polynomial of degree $N\ge 2$. We say that $f(x)$ is {\em monogenic} if $f(x)$ is irreducible over ${\mathbb Q}$ and $\{1,\theta,\theta^2,\ldots,\theta^{N-1}\}$ is a basis for the ring of integers of ${\mathbb Q}(\theta)$, where $f(\theta)=0$.  It is well known \cite{Cohen} that 
  \begin{equation} \label{Eq:Dis-Dis}
\Delta(f)=\left[\Z_K:\Z[\theta]\right]^2\Delta(K),
\end{equation} where $\Delta(f)$ and $\Delta(K)$ denote the discriminants over $\Q$, respectively, of $f(x)$ and the number field $K$. We refer to $\left[\Z_K:\Z[\theta]\right]$ as the {\em index} of $f(x)$, and we denote it index($f$). 
Thus, from \eqref{Eq:Dis-Dis}, $f(x)$ is monogenic if and only if the index($f$)=1.
%\begin{equation*}\label{Mono}
%f(x) \ \mbox{is monogenic if and only if the index($f$)=1}.
%\end{equation*}

 Recently, the third author \cite{JonesWie} proved that 
 \begin{equation}\label{JonesWie}
 x^{2p}+2x^p+2 \quad \mbox{is not monogenic if and only if} \quad 2^{p-1}\equiv 1 \pmod{p^2}.
  \end{equation}  
 Motivated by \eqref{JonesWie}, the authors pondered  whether \eqref{JonesWie} could be generalized to trinomials of the form $x^{2p}+bx^p+b$, where $b\ge 3$ is an integer and $p$ is an odd prime. %, where $b$ and $b-4$ are squarefree integers with $b\ge 3$, and $p$ is an odd prime that does not divide $b(b-4)$. 
 One might surmise that the generalization could be that $x^{2p}+bx^p+b$ is not monogenic if and only if $b^{p-1}\equiv 1 \pmod{p^2}$. While this intuition turns out to be correct for $b=3$, it fails to be true when $b\ge 5$, where the situation is more complicated. An analysis of the approach used in \cite{JonesWie} regarding the monogenicity of $x^{2p}+2x^p+2$  reveals that the setting for an appropriate generalization of \eqref{JonesWie} should require that both $b$ and $b-4$ be squarefree. Otherwise, when $x^{2p}+bx^p+b$ is irreducible over $\Q$, it is never monogenic, regardless of whether $p$ is, or is not, a base-$b$ Wieferich prime. Therefore, to avoid this muddled situation, we assume in the sequel that 
 \begin{equation}\label{Eq:Ass}
 \begin{gathered}
   \mbox{$b\ge 2$ is an integer such that $b$ and $b-4$ are squarefree,}\\
   \mbox{and we let $D:=b^2-4b$.}
   \end{gathered} 
 \end{equation}     
   In this article, we use methods that differ from \cite{JonesWie} to establish a generalization of \eqref{JonesWie} that holds for all $b \ge 2$, subject to \eqref{Eq:Ass}, and consequently, we provide a new proof of \eqref{JonesWie}.   
 
  We also use the following definitions and notation. 
  \begin{defn}\label{Def:Res}
Let $f(x)\in \Z[x]$ and $g(x)\in \Z[x]$ be monic with respective degrees of $m$ and $n$. Then, the {\em resultant} of $f(x)$ and $g(x)$ is %, denoted $R(f,g)$,
\[Res(f,g)=(-1)^{mn}\prod_{i=1}^mg(\lambda_i),\]
where $\lambda_1,\lambda_2,\ldots ,\lambda_m$ are the zeros of $f(x)$. 
\end{defn} 
\noindent It is easy to see from Definition \ref{Def:Res} that $f(x)$ and $g(x)$ have a zero in common if and only if $Res(f,g)=0$.
  \begin{defn}\label{Def:Lucas}
  Define the Lucas sequence of the second kind $(V_n)_{n\ge -1}$ by %s $(U_n)_{n\ge 0}$ and $(V_n)_{n\ge 0}$ by 
  \begin{gather*}
 V_{-1}:=-1, \quad V_0=2,\quad V_1=-b \quad \mbox{and}\\
  V_{n}=-bV_{n-1}-bV_{n-2} \quad \mbox{for $n\ge 2$.}
\end{gather*} Then the characteristic polynomial of $(V_n)_{n\ge -1}$ is 
\begin{equation*}\label{G}
G(x):=x^2+bx+b,
\end{equation*} with zeros $\alpha$ and $\beta$, so that $\alpha+\beta=-b$, $\alpha\beta=b$ and 
$V_n=\alpha^n+\beta^n$.
\end{defn}
\begin{defn}\label{Def:Gpb}
 Let $p\ge 3$ be a prime and define
  \[\G_{p,b}(x):=G(x^p)=x^{2p}+bx^p+b.\]
\end{defn}
\noindent Note, since $b\ge 2$ is squarefree, %from \eqref{Eq:Ass}
it follows that $\G_{p,b}(x)$ is Eisenstein with respect to every prime divisor of $b$, and thus $\G_{p,b}(x)$ is irreducible over $\Q$.
 \begin{defn}
   For a prime $p\ge 3$, we let $\delta$ denote the Legendre symbol $(\frac{D}{p})$. 
 \end{defn}  
 Our main results are as follows. 
 \begin{thm}\label{Thm:Main}  Let $b$ and $D$ be as described in \eqref{Eq:Ass}, and let $p\ge 3$ be a prime.  %such that $D\not\equiv 0 \pmod{p}$. %, and let $p\ge 3$ be a prime such that $D\not\equiv 0 \pmod{p}$. Then 
      \begin{enumerate}
      \item \label{I0:Main Thm} If $\delta=0$, then $\G_{p,b}(x)$ is monogenic.
     \item \label{I1:Main Thm} If $\delta=1$, then $\G_{p,b}(x)$ is not monogenic if and only if 
     \begin{equation*}\label{condition delta=1}
     (2V_p+b^p+b)^2-D(b^{p-1}-1)^2 \equiv 0 \pmod{p^3};
     \end{equation*}
     \item \label{I2:Main Thm} If $\delta=-1$, then $\G_{p,b}(x)$ is not monogenic if and only if
     \begin{equation*}\label{condition delta=-1} 
    V_p\equiv -b \pmod{p^2} \quad \mbox{ and } \quad b^{p-1}\equiv 1 \pmod{p^2}.
    \end{equation*}
    \end{enumerate}
   \end{thm}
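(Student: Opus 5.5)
Let $f(x)=\G_{p,b}(x)=x^{2p}+bx^p+b$ and let $\theta$ be a zero of $f(x)$. The plan is to use Dedekind's criterion prime by prime on the primes dividing $\Delta(f)$, via \eqref{Eq:Dis-Dis}.

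\textbf{Step 1: the discriminant.} For a trinomial $x^{2p}+bx^p+b$, the standard formula (or a direct computation with Definition \ref{Def:Res}, writing $f'(x)=px^{p-1}(2x^p+b)$) gives, up to sign,
\[
\Delta(f)=p^{2p}\,b^{2p-1}\,(b-4)^p=p^{2p}b^{p-1}D^p .
\]
So only primes $q\mid pD$ can divide index$(f)$.

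\textbf{Step 2: primes $q\ne p$.} For $q\mid b$, $f$ is Eisenstein at $q$ because $b$ is squarefree, so $q\nmid$ index$(f)$. For $q\mid b-4$ with $q\nmid pb$, we have $f\equiv (x^p+b/2)^2$ when $q$ is odd (with $q=2$ handled directly). Since $x^p+b/2$ is separable mod $q$, Dedekind's criterion reduces to checking that the remainder $(f-(x^p+b/2)^2)/q=-(b^2-4b)/(4q)$ is coprime to $x^p+b/2$ mod $q$. This holds exactly because $q^2\nmid b(b-4)$, using that $b-4$ is squarefree. This recovers the remark in the introduction that squarefreeness is needed. When $\delta=0$ and $p\mid D$, the same kind of argument at $q=p$ (the Eisenstein case if $p\mid b$; Dedekind using $x^{2p}+bx^p+b\equiv (x+2)^{2p}$-type factorizations if $p\mid b-4$) should give part \eqref{I0:Main Thm}.

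\textbf{Step 3: $q=p$ with $p\nmid D$.} Modulo $p$ we have $f(x)\equiv G(x)^p$, by Frobenius applied to $G(x^p)\equiv G(x)^p$. Write $G(x)\equiv\prod g_i(x)$ mod $p$. If $\delta=1$, the two factors are linear, $x-r$ and $x-s$. If $\delta=-1$, $G$ is irreducible of degree $2$. Dedekind's criterion says $p\mid$ index$(f)$ iff some $g_i$ divides $F(x)=(f(x)-G(x)^p)/p$ modulo $p$. For linear factors this is the condition $F(r)\equiv 0$, i.e.\ $f(\tilde r)\equiv G(\tilde r)^p\equiv 0\pmod{p^2}$ at a Hensel lift, or more robustly $p^2\mid N$ for the appropriate norm-type expression. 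The cleanest route is through resultants: $p\mid$ index$(f)$ iff $p$ divides $\mathrm{Res}(G,F)$ (for $\delta=-1$), or one of the partial resultants (for $\delta=1$). So I would compute $\mathrm{Res}(G,F)$ explicitly. Using that $\alpha,\beta$ are the zeros of $G$, we get $F(\alpha)=(\alpha^{2p}+b\alpha^p+b)/p$, since $G(\alpha)=0$. Then
\[
p^2F(\alpha)F(\beta)=\prod_{\gamma\in\{\alpha,\beta\}}(\gamma^{2p}+b\gamma^p+b),
\]
and expanding with $\alpha^p\beta^p=b^p$ and $\alpha^p+\beta^p=V_p$ expresses this as a polynomial in $V_p$ and $b^p$:
\[
b^{2p}+bV_p b^p+b(V_p^2-2b^p)+b^2b^p+b^2V_p+b^2 .
\]
For $\delta=-1$ the condition becomes $p^3\mid$ this expression, with the factor $p$ being inherent. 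Using $V_p\equiv V_1=-b$ and $b^p\equiv b$ mod $p$, I would write $V_p=-b+pu$ and $b^{p}=b+pv$. The condition should then reduce to a quadratic form in $(u,v)$ that is anisotropic mod $p$ precisely when $\delta=-1$, which forces $u\equiv v\equiv0$, i.e.\ the stated conditions $V_p\equiv-b$ and $b^{p-1}\equiv1\pmod{p^2}$. For $\delta=1$ the form splits into two linear factors over $\F_p$, corresponding to the two roots $r,s$. Non-monogenicity occurs iff either factor vanishes, which is iff the product, written symmetrically as $(2V_p+b^p+b)^2-D(b^{p-1}-1)^2$, is $\equiv0\pmod{p^3}$. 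Here I would use $(2x+b)^2-D=4G(x)$ to identify the two linear factors with $F(r),F(s)$.

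\textbf{Main obstacle.} The delicate step is the $\delta=1$ case. There I must justify that the single condition modulo $p^3$ is equivalent to $F(r)\equiv0$ or $F(s)\equiv0$ mod $p$, i.e.\ that the quadratic expression factors as $p^2$ times a product of two quantities each congruent to $F(r)$, $F(s)$ up to units. This needs careful bookkeeping of the $p$-adic valuations of $V_p+b$ and $b^p-b$, perhaps working in $\Z_p$ where $\alpha,\beta\in\Z_p$ and computing $\alpha^p-\alpha$ and $\beta^p-\beta$ directly.
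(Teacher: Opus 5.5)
Your proposal is correct and is essentially the paper's argument: Dedekind at $p$ (packaged in the paper as Theorem~\ref{Thm:JKS} and Corollary~\ref{Cor:JKS}, with the primes other than $p$ handled by the Jakhar--Khanduja--Sangwan criterion instead of by hand) reduces non-monogenicity to $p^3\mid \G_{p,b}(\alpha)\G_{p,b}(\beta)=\tfrac{b}{4}(A^2-DB^2)$, where $A=2V_p+b^p+b$ and $B=b^{p-1}-1$ (Lemmas~\ref{Lem:Res1} and~\ref{Lem:Res2}); your anisotropy argument in $(u,v)$ for $\delta=-1$ is a slightly leaner substitute for the paper's conjugation/$p^4$ step. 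Your ``main obstacle'' for $\delta=1$ is not real, because $\bar G$ is squarefree, so $p\mid \mathrm{Res}(G,F)$ is already equivalent to $p^3\mid \tfrac b4(A^2-DB^2)$, and $p\nmid 4b$; for $\delta=0$ with $p\mid b-4$, however, you still need the one-line check $\G_{p,b}(-2)=(2^p-2)^2-(b-4)(2^p-1)\not\equiv 0\pmod{p^2}$, which holds because $p\mid 2^p-2$, $p\nmid 2^p-1$ and $p^2\nmid b-4$.
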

   \begin{cor}\label{Cor1:Main}
     Let $b$ and $D$ be as described in \eqref{Eq:Ass}, and let $p\ge 3$ be a prime such that $D\not\equiv 0 \pmod{p}$. If 
    \[V_p\equiv -b \pmod{p^2} \quad \mbox{ and } \quad b^{p-1}\equiv 1 \pmod{p^2},\] then $\G_{p,b}(x)$ is not monogenic.  
   \end{cor}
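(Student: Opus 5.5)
The statement to prove is Corollary \ref{Cor1:Main}. The plan is to deduce it from Theorem \ref{Thm:Main} by splitting on the value of $\delta$. Since $D\not\equiv 0 \pmod{p}$, we have $\delta\in\{1,-1\}$, so part \eqref{I0:Main Thm} never arises.

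\emph{Case $\delta=-1$.} Here the hypotheses are exactly the condition in Theorem \ref{Thm:Main}\eqref{I2:Main Thm}, so $\G_{p,b}(x)$ is not monogenic and nothing more is needed.

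\emph{Case $\delta=1$.} I would verify the congruence of Theorem \ref{Thm:Main}\eqref{I1:Main Thm}. Write $V_p=-b+p^2s$ and $b^{p-1}=1+p^2t$ with $s,t\in\Z$. Then
\[
2V_p+b^p+b = -2b+2p^2s+b(1+p^2t)+b = p^2(2s+bt),
\]
so $(2V_p+b^p+b)^2\equiv 0 \pmod{p^4}$. Also $D(b^{p-1}-1)^2=Dp^4t^2\equiv 0\pmod{p^4}$. Hence the whole expression
\[
(2V_p+b^p+b)^2-D(b^{p-1}-1)^2
\]
is divisible by $p^4$, and in particular by $p^3$. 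Theorem \ref{Thm:Main}\eqref{I1:Main Thm} then says $\G_{p,b}(x)$ is not monogenic.

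There is no real obstacle here. The one step to check is the cancellation of the constant terms $-2b+b+b=0$ in the case $\delta=1$, which is what makes $2V_p+b^p+b$ vanish modulo $p^2$.
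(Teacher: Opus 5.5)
Your proposal is correct and follows the paper's proof: the case $\delta=-1$ comes directly from item~\eqref{I2:Main Thm} of Theorem~\ref{Thm:Main}, and for $\delta=1$ you show $2V_p+b^p+b\equiv 0 \pmod{p^2}$ and then apply item~\eqref{I1:Main Thm}. You are slightly more explicit than the paper in checking that $D(b^{p-1}-1)^2$ is also divisible by $p^4$, which the paper leaves implicit.
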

  The following corollary indicates that Theorem \ref{Thm:Main} can be refined in the special cases of $b\in \{2,3\}$.
 %\begin{cor}\label{Cor2:Main} Let $b\in \{2,3,5\}$, and suppose that $p\ge 3$ is a prime such that $D\not\equiv 0 \pmod{p}$. 
 %\begin{enumerate}
 %\item \label{I1:Main Cor} If $b\in \{2,3\}$, then $\G_{p,b}(x)$ is not monogenic if and only if
 %\[b^{p-1}\equiv 1 \pmod{p^2};\]
 % \item \label{I2:Main Cor} If $b=5$, then $\G_{p,5}(x)$ is not monogenic if and only if 
 %  \[V_p\equiv -b \pmod{p^2} \quad \mbox{and} \quad 5^{p-1}\equiv 1 \pmod{p^2}.\] %V_{p-\delta+1}+V_{p-\delta-1}
 %  \end{enumerate} 
 %\end{cor} 
 \begin{cor}\label{Cor2:Main} Let $b\in \{2,3\}$, and let $p\ge 3$ be a prime such that $D\not\equiv 0 \pmod{p}$. %and suppose that $p$ is an odd prime not dividing $D$. %\ge 3$ is a prime such that %$D\not\equiv 0 \pmod{p}$. 
 The trinomial $\G_{p,b}(x)$ is not monogenic if and only if $b^{p-1}\equiv 1 \pmod{p^2}$. %$p$ is a base-$b$ Wieferich prime.
 %\[b^{p-1}\equiv 1 \pmod{p^2}.\]
 \end{cor}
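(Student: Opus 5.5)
The plan is to specialize Theorem \ref{Thm:Main} to $b\in\{2,3\}$ and show that, in each case $\delta=\pm 1$, the criterion there is equivalent to $b^{p-1}\equiv 1 \pmod{p^2}$. Both values satisfy \eqref{Eq:Ass}: $b=2$ gives $b-4=-2$ and $D=-4$, and $b=3$ gives $b-4=-1$ and $D=-3$. Since $p\nmid D$, case (\ref{I0:Main Thm}) does not occur, and for $b=3$ we have $p\ge 5$.

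\textbf{Closed form for $V_p$.} The zeros of $G(x)$ are $\alpha,\beta=-1\pm i$ when $b=2$, and $\alpha,\beta=(-3\pm\sqrt{-3})/2$ when $b=3$. In both cases $\alpha^2=b\zeta$ for a root of unity $\zeta$ of order $4$ or $6$, and $\alpha\bar\zeta^{j}$ runs over a finite explicit set. Set $t:=b^{(p-1)/2}$. Then
\[\alpha^p=\alpha(\alpha^2)^{(p-1)/2}=t\,\alpha\zeta^{(p-1)/2}.\]
Taking twice the real part gives $V_p\in\{0,\pm b\}\cdot t$. Indeed $2\,\mathrm{Re}(\alpha\cdot\text{root of unity})\in\{0,\pm2\}$ for $b=2$ and $\{0,\pm3\}$ for $b=3$, and $\{0,\pm 2\}\cdot t=\{0,\pm b\}\cdot t$ when $b=2$. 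Also $V_p=\alpha^p+\beta^p\equiv(\alpha+\beta)^p=(-b)^p\equiv -b \pmod p$, so $V_p\ne 0$. Hence $V_p=\varepsilon b\,t$ for some $\varepsilon\in\{\pm1\}$, and comparing with $V_p\equiv -b\pmod p$ gives $\varepsilon t\equiv -1\pmod p$. Write $u:=t+\varepsilon$, so that $p\mid u$. Note also that $t-\varepsilon\equiv -2\varepsilon\pmod p$ is a $p$-adic unit, and that $t^2-1=u(t-\varepsilon)$.

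\textbf{Case $\delta=-1$.} By Theorem \ref{Thm:Main}(\ref{I2:Main Thm}), it suffices to show that $b^{p-1}\equiv1\pmod{p^2}$ forces $V_p\equiv -b\pmod{p^2}$. If $t^2\equiv 1\pmod{p^2}$, then $p^2\mid u(t-\varepsilon)$. Since $t-\varepsilon$ is a unit, $p^2\mid u$, so $\varepsilon t\equiv -1\pmod{p^2}$. Therefore $V_p=\varepsilon bt\equiv -b\pmod{p^2}$.

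\textbf{Case $\delta=1$.} Substitute $2V_p=2\varepsilon bt$, $b^p+b=b(t^2+1)$ and $b^{p-1}-1=t^2-1$. Using $\varepsilon^2=1$,
\[(2V_p+b^p+b)^2-D(b^{p-1}-1)^2=b^2u^4+(4b-b^2)u^2(t-\varepsilon)^2=b\,u^2\bigl(bu^2+(4-b)(t-\varepsilon)^2\bigr).\]
The last factor is congruent to $4(4-b)\pmod p$, which is a unit since $p\ge3$ and $4-b\in\{1,2\}$; $b$ is likewise prime to $p$. So the condition in Theorem \ref{Thm:Main}(\ref{I1:Main Thm}) is equivalent to $p^3\mid u^2$, i.e.\ $p^2\mid u$. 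Given $p\mid u$ and that $t-\varepsilon$ is a unit, $p^2\mid u$ is equivalent to $p^2\mid u(t-\varepsilon)=t^2-1=b^{p-1}-1$.

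\textbf{Main obstacle.} The only delicate step is the closed form $V_p=\varepsilon b\,b^{(p-1)/2}$: one must rule out the value $0$ (done by $V_p\equiv -b\pmod p$) and check the finite list of root-of-unity twists. After that, both cases are short factorization computations.
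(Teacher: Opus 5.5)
Your proof is correct. It reaches the same core fact as the paper, namely that $V_p=\pm b\cdot b^{(p-1)/2}$, so that $A=2V_p+b^p+b$ collapses to $b\,(b^{(p-1)/2}\pm 1)^2$. It gets there by a different route in two places.

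\textbf{The closed form and its sign.} The paper proves the closed forms \eqref{b=2 V_p} and \eqref{b=3 V_p} by induction, tracking the sign through $n\bmod 8$ or $n\bmod 12$. It then matches that sign against $b^{(p-1)/2}\bmod p^2$ using Euler's criterion and quadratic reciprocity, as in \eqref{3 cong}. You instead get $\abs{V_p}=b\,t$ from $\alpha^2=b\zeta$ and never determine $\varepsilon$ at all. The single congruence $V_p\equiv -b\pmod p$ gives the only relation you need, $\varepsilon t\equiv -1\pmod p$. Lifting this to $p^2$ is then just the observation that $t-\varepsilon$ is a unit.

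\textbf{The case $\delta=1$.} The paper splits $A^2-DB^2=(A-B\sqrt D)(A+B\sqrt D)$, implicitly using a square root of $D$ modulo $p^2$, and solves for $B$. Your factorization $A^2-DB^2=b\,u^2\bigl(bu^2+(4-b)(t-\varepsilon)^2\bigr)$ shows directly that $v_p(A^2-DB^2)=2v_p(u)$. This settles both implications at once. It also treats $b=2$ and $b=3$ uniformly, whereas the paper writes out only $b=3$ and leaves $b=2$ to ``a similar manner''.

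\textbf{What each buys.} The paper's version is more concrete, since it records which residue classes of $p$ give which sign. Yours is shorter and avoids reciprocity entirely.

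One wording point: the claim $2\,\mathrm{Re}(\alpha\cdot\text{root of unity})\in\{0,\pm2\}$ (respectively $\{0,\pm3\}$) holds only for powers of your $\zeta$, that is, fourth (respectively sixth) roots of unity. That is all you use, but you should say so.
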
 

The next theorem gives an extension of Theorem \ref{Thm:Main}.
\begin{thm}\label{Thm:Main2}
   Let $b$ and $D$ be as described in \eqref{Eq:Ass}, and let $n\ge 3$ be an integer. Define
  \[\G_{n,b}(x):=x^{2n}+bx^n+b.\] Then $\G_{n,b}(x)$ is monogenic if and only if $\G_{p,b}(x)$ is monogenic for every prime divisor $p$ of $n$. % if and only if no prime divisor $p$ of $n$ with $\delta=1$ satisfies \eqref{condition delta=1} and no prime divisor $p$ of $n$ with $\delta=-1$ satisfies \eqref{condition delta=-1}.   
\end{thm}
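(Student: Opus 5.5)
We plan to prove Theorem~\ref{Thm:Main2} with Dedekind's criterion applied prime by prime, showing that for every prime $q$ the $q$-part of index$(\G_{n,b})$ is trivial if and only if the corresponding condition holds for the relevant prime divisor of $n$. First we compute the discriminant. The standard trinomial discriminant formula gives, for $x^{2n}+bx^n+b$,
\[
\Delta(\G_{n,b})=\pm\, n^{2n}\, b^{2n-1}\, (b^2-4b)^n = \pm\, n^{2n} b^{2n-1} D^n .
\]
So a prime $q$ can divide the index only if $q\mid nbD$.

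\textbf{Primes $q\mid b$.} Here $\G_{n,b}$ is $q$-Eisenstein, because $b$ is squarefree. Hence $q\nmid$ index, and the same holds for every $\G_{p,b}$.

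\textbf{Primes $q\mid D$, $q\nmid bn$.} Then $q\mid b-4$, and $q$ is odd: indeed $b$ and $b-4$ have the same parity, and $b$ squarefree with $b-4$ squarefree rules out $4\mid$ either. Modulo $q$ we have $G(x)\equiv (x+b/2)^2$, so
\[
\G_{n,b}(x)\equiv (x^n+b/2)^2 \pmod q .
\]
Since $q\nmid n$, the polynomial $x^n+b/2$ is separable mod $q$. We then apply Dedekind's criterion. Write $\G_{n,b}=(x^n+b/2)^2+q\,F(x)$ over $\Z[1/2]$; concretely, $F=(4\G_{n,b}-(2x^n+b)^2)/(4q)=-(b^2-4b)/(4q)$, a constant. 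Because $q\,\|\,(b-4)$ and $q\nmid b$, we get $q^2\nmid D$, so $F$ is a unit mod $q$ and the criterion gives $q\nmid$ index. The same argument handles every divisor $p$ of $n$ with $q\nmid p$, so these primes never obstruct either side of the equivalence. The exceptional case $q=p\mid n$ with $q\mid D$ is absorbed below.

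\textbf{Primes $q=p\mid n$.} Write $n=p^k m$ with $p\nmid m$. This is the heart of the argument: we must show that $p\mid$ index$(\G_{n,b})$ if and only if $p\mid$ index$(\G_{p,b})$. Modulo $p$,
\[
\G_{n,b}(x)\equiv G(x^{m})^{p^k}\pmod p ,
\]
and the factorization of $G(x^m)$ mod $p$ into irreducibles is separable, since $p\nmid mbD$ in the main case. The key reduction uses the tower $\Q(\theta)\supseteq\Q(\theta^{p^{k-1}m})$, or more directly a comparison of Dedekind remainders. Let $g(x)$ be the radical of $\G_{n,b}$ mod $p$, lifted, and compute $F=(\G_{n,b}-g^{p^k}\cdots)/p$. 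We would instead follow the cleaner route. Pass to the roots of $G$ mod $p$, working in $\F_p$ or $\F_{p^2}$ according to $\delta$. The Dedekind condition then reduces to whether $\G_{n,b}$ evaluated at a suitable lift of a root $r$ of $x^m\equiv \alpha$ is $\equiv 0 \pmod{p^2}$, or its analogue in the unramified extension $\Z_p[\alpha]$. This amounts to asking whether $\alpha^{p^{k}}$-type Teichmüller conditions hold. In $\Z_p[\alpha]$, whether the $p$-th power map fixes the residue lifting mod $p^2$ depends only on the first step, i.e.\ on $\alpha^{p}\equiv\alpha^{\sigma}\pmod{p^2}$, where $\sigma$ is Frobenius. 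This is exactly the condition characterizing non-monogenicity of $\G_{p,b}$ in Theorem~\ref{Thm:Main}: its expressions in $V_p$ and $b^{p-1}$ are the norm/trace encodings of $\alpha^p\equiv\alpha^\sigma$. So we will show both sides are equivalent to this same $p$-adic condition on $\alpha$, which is independent of $k$ and $m$ because raising to $m$ (prime to $p$) and iterating Frobenius preserve being a Teichmüller lift mod $p^2$. The case $p\mid D$ is handled similarly: there $\G_{p,b}$ is monogenic by Theorem~\ref{Thm:Main}(\ref{I0:Main Thm}), and one checks that the corresponding Dedekind remainder is a unit, as in the Eisenstein-like computation above.

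Combining the three cases, index$(\G_{n,b})=1$ if and only if no prime $p\mid n$ divides index$(\G_{p,b})$. The main obstacle is the case $q=p\mid n$: we must carry out Dedekind's criterion for $\G_{n,b}$ with repeated factor exponent $p^k$, and verify that the resulting obstruction is exactly the $k=1$, $m=1$ obstruction. The step we would attempt first is the explicit Teichmüller reformulation of Theorem~\ref{Thm:Main}'s conditions.
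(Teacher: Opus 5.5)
Your plan is correct once its central step is written out, and it takes a genuinely different route from the paper.

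\textbf{Comparison of the two routes.} The paper's proof is a two-line citation. It applies the composition theorem of Kaur--Kumar--Remete (Theorem~\ref{Thm:KKR}) with $f=G$ and $k=n$, using that $G$ is monogenic (an external result of Jones) and that $G(0)=b$ is squarefree. This leaves only the primes $p\mid n$ as possible divisors of the index. The identification of ``$p\nmid\mathrm{index}(\G_{n,b})$'' with ``$\G_{p,b}$ is monogenic'' is left to that theorem.

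You instead run Dedekind's criterion prime by prime, which in effect reproves the needed special case of the composition theorem for $f=G$. Primes dividing $b$ are Eisenstein. Primes $q\mid b-4$ with $q\nmid n$ are handled by the identity $4\G_{n,b}=(2x^n+b)^2-D$ together with $q^2\nmid D$. For $p\mid n$, a Teichm\"uller-lift computation shows that the $p$-part of the index depends neither on $k=v_p(n)$ nor on $m$.

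Your route buys three things: it is self-contained, it proves explicitly that $p\mid\mathrm{index}(\G_{n,b})$ if and only if $p\mid\mathrm{index}(\G_{p,b})$, and it treats $p=2$ uniformly, which matters since $n$ may be even. The paper's route buys brevity and modularity.

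\textbf{How the key step goes through.} Your sketch of this step is loose, in particular the sentence about the $p$-th power map ``depending only on the first step.'' Here is a precise version.

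Take $p\mid n$ with $p\nmid bD$, and write $n=p^km$ with $p\nmid m$. Every root $r\in\overline{\F}_p$ of $\G_{n,b}\bmod p$ is multiple. Take a lift $\tilde r$ of $r$ in an unramified extension of $\Z_p$; in general this extension is larger than $\Z_p[\alpha]$, since $r$ need not lie in $\F_p(\bar\alpha)$. Because $r$ is a multiple root, $\G_{n,b}(\tilde r)\bmod p^2$ is independent of the lift. Dedekind's criterion then says $p\mid\mathrm{index}(\G_{n,b})$ if and only if this value vanishes for some $r$.

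Since $(1+pu)^p\equiv 1\pmod{p^2}$ (also for $p=2$), you get
$\tilde r^{\,n}\equiv\omega(\bar\alpha)^{p^k}=\sigma^k(\omega(\bar\alpha))\pmod{p^2}$.
Here $\bar\alpha=r^m$ is a root of $G\bmod p$, $\omega$ is the Teichm\"uller lift, and $\sigma$ is Frobenius. Hence $\G_{n,b}(\tilde r)\equiv\sigma^k\bigl(G(\omega(\bar\alpha))\bigr)\pmod{p^2}$. Moreover $G(\omega(\bar\alpha))=(\omega(\bar\alpha)-\alpha)(\omega(\bar\alpha)-\beta)$, and the second factor is a unit.

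Every root of $G\bmod p$ is nonzero, hence an $m$-th power in $\overline{\F}_p$. So the criterion is ``$\alpha\equiv\omega(\bar\alpha)\pmod{p^2}$ for some root $\alpha$ of $G$,'' which involves neither $k$ nor $m$. Compare this with the case $n=p$ of the same computation, not with Theorem~\ref{Thm:Main}, which excludes $p=2$.

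For $p\mid b-4$ with $p\mid n$, the same computation gives $\G_{n,b}(\tilde r)\equiv G(\omega(-2))\equiv(b-4)(\omega(-2)+1)\not\equiv 0\pmod{p^2}$.

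\textbf{Minor slip.} The discriminant is $\pm n^{2n}b^{2n-1}(b-4)^n$, not $\pm n^{2n}b^{2n-1}D^n$. Only its set of prime divisors is used, so this is harmless.
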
 

Highlighting the obvious, the case $p=2$ (and $n=2$) has been excluded from the previously stated results. One reason for this omission is that  determining whether 2 is a base-$b$ Wieferich prime is simply an immediate consequence of the definition. Indeed, 2 is a base-$b$ Wieferich prime if and only if $b\equiv 1 \pmod{4}$. Regardless of this triviality, and for the sake of completeness, we present the following theorem that addresses the relationship between the monogenicity of $\G_{2,b}(x)$ and whether 2 is a base-$b$ Wieferich prime. We point out that this theorem is not a new result. In fact, it is merely a corollary of \cite[Theorem 2.6]{JonesNYJM2026}, and so we do not supply a proof here.  

% Regardless of this triviality, it turns out that the relationship between the monogenicity of $\G_{2,b}(x)$ and when 2 is a base-$b$ Wieferich prime is a bit surprising, and seems to run somewhat contrary to the situation that occurs in \eqref{JonesWie}. For the sake of completeness, we present the following theorem that addresses the case $p=2$. 
\begin{thm}\label{Thm:Main3}
 Let $b$ and $D$ be as described in \eqref{Eq:Ass}. Then $\G_{2,b}(x)$ is not monogenic if and only if %$b\equiv 1 \pmod{4}$. 
 $2$ is a base-$b$ Wieferich prime. 
\end{thm}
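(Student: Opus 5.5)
The plan is to apply Dedekind's index criterion to $f(x):=\G_{2,b}(x)=x^4+bx^2+b$ at each prime that could divide index$(f)$. For a trinomial $x^4+ax^2+c$ the discriminant is $16c(a^2-4c)^2$. Taking $a=c=b$ gives
\[
\Delta(f)=16\,b\,(b^2-4b)^2=16\,b^3(b-4)^2 .
\]
By \eqref{Eq:Ass} the polynomial $f(x)$ is irreducible, and by \eqref{Eq:Dis-Dis} only primes $q\mid 2b(b-4)$ can divide index$(f)$. The goal is to show that index$(f)$ is odd-prime-free, and that $2\mid$ index$(f)$ exactly when $b\equiv 1\pmod 4$. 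Since $2$ is a base-$b$ Wieferich prime precisely when $b\equiv 1\pmod 4$, this gives the theorem.

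\emph{Primes dividing $b$.} Every prime $q\mid b$ (including $q=2$ when $b$ is even) is harmless. Since $b$ is squarefree, $f(x)$ is $q$-Eisenstein, so $q\nmid$ index$(f)$.

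\emph{Odd primes $q\mid b-4$.} Here $q\nmid b$, and $f(x)\equiv x^4+4x^2+4=(x^2+2)^2\pmod q$. I take the lift $(x^2+2)^2$ in Dedekind's criterion. Then
\[
f(x)-(x^2+2)^2=(b-4)(x^2+1),
\]
so the relevant polynomial is $\tfrac{b-4}{q}(x^2+1)$. Because $b-4$ is squarefree, $\tfrac{b-4}{q}\not\equiv 0\pmod q$. Also $x^2+1$ and $x^2+2$ are coprime mod $q$, since $1\not\equiv 2\pmod q$. Hence $x^2+1$ shares no irreducible factor with $x^2+2$ modulo $q$, and Dedekind gives $q\nmid$ index$(f)$.

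\emph{The prime $2$ with $b$ odd.} Then $f(x)\equiv x^4+x^2+1=(x^2+x+1)^2\pmod 2$, with $x^2+x+1$ irreducible over $\F_2$. Using the lift $(x^2+x+1)^2$, a direct computation gives
\[
\frac{f(x)-(x^2+x+1)^2}{2}=-x^3+\tfrac{b-3}{2}x^2-x+\tfrac{b-1}{2}.
\]
Next I reduce this modulo $2$ and evaluate at a root $\omega\in\F_4$ of $x^2+x+1$, using $\omega^3=1$. This yields
\[
1+\tfrac{b-3}{2}\,\omega^2+\omega+\tfrac{b-1}{2}.
\]
If $b\equiv 1\pmod 4$, then $\tfrac{b-1}{2}$ is even and $\tfrac{b-3}{2}$ is odd, so the value is $1+\omega+\omega^2=0$. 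Thus $x^2+x+1$ divides the Dedekind polynomial mod $2$, and $2\mid$ index$(f)$. If $b\equiv 3\pmod 4$, the value is $\omega\neq 0$, so $2\nmid$ index$(f)$.

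Combining the three cases: when $b\equiv 3\pmod 4$ or $b$ is even, $f(x)$ is monogenic, and when $b\equiv 1\pmod 4$, it is not. This is exactly the statement.

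The only delicate step is the $2$-adic case. There I would double-check the arithmetic of the quotient polynomial and the evaluation at $\omega$, since the parity bookkeeping of $(b-1)/2$ and $(b-3)/2$ is what makes the split fall exactly along $b\bmod 4$.
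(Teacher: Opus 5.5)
Your proof is correct, and it necessarily takes a different route from the paper's. The paper gives no proof of this theorem; it only remarks that the statement is a corollary of \cite[Theorem 2.6]{JonesNYJM2026}. You instead give a self-contained check with Dedekind's criterion at every prime $q$ with $q^2\mid\Delta(\G_{2,b})=16b^3(b-4)^2$. That check has three parts: Eisenstein at $q\mid b$, the coprimality of $x^2+1$ and $x^2+2$ at odd $q\mid b-4$, and evaluation at $\omega\in\F_4$ at $q=2$ for odd $b$. The quotient $-x^3+\tfrac{b-3}{2}x^2-x+\tfrac{b-1}{2}$ and both parity cases are right, and this computation makes clear why the dichotomy falls exactly along $b \bmod 4$. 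The citation buys brevity and places the result inside a more general family, while your route buys independence from the external reference.

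You could shorten your argument using tools already in the paper. As in the proof of Theorem~\ref{Thm:Main2}, $G(x)$ is monogenic and $G(0)=b$ is squarefree. So Theorem~\ref{Thm:KKR} with $k=2$ reduces everything to whether $2\mid\mathrm{index}(\G_{2,b})$, which makes your odd-prime cases unnecessary.

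One small point of rigor concerns the case $\left(\frac{-2}{q}\right)=1$. There $(x^2+2)^2$ is not a product of monic integer lifts of the irreducible factors of $\bar f$ in $\F_q[x]$. You are implicitly using that Dedekind's criterion in the form $\gcd(\bar F,\bar g,\bar h)=1$, with $F=(gh-f)/q$, does not depend on the choice of monic lifts $g$, $h$ of $\mathrm{rad}(\bar f)$ and $\bar f/\mathrm{rad}(\bar f)$. This is true, because replacing $g,h$ by $g+qu$, $h+qv$ changes $\bar F$ by $\bar u\bar h+\bar v\bar g$, but it is worth stating.
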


  %\begin{rem}\label{Rem:WSS}
  %  A prime $p$ such that $F_{p-\left(\frac{5}{p}\right)}\equiv 0 \pmod{p^2}$ is called a {\em Wall-Sun-Sun prime}. There are %currently no known Wall-Sun-Sun primes.
  %\end{rem}
 \section{Preliminaries}\label{Section:Prelims} Using a theorem due to Swan \cite{Swan}, we have 
 \begin{equation}\label{Swan}
 \Delta(\G_{p,b})=p^{2p}b^{2p-1}(b-4)^p.
 \end{equation} A standard tool used to determine the monogenicity of a monic irreducible polynomial $f(x)$ is known as Dedekind's Index Theorem \cite{Cohen}, which can determine whether a prime divisor of $\Delta(f)$ divides the index($f$). However, for trinomials in particular, Jakhar, Khanduja and Sangwan \cite{JKS2} have given a more streamlined version of Dedekind's theorem. In light of \eqref{Eq:Ass}, Definition \ref{Def:Gpb} and \eqref{Swan}, it turns out that an application of the theorem of Jakhar et al. to our specific trinomials $\G_{p,b}(x)$ reveals that the monogenicity of $\G_{p,b}(x)$ is determined completely by whether the single prime $p$ divides the index($\G_{p,b}$). More precisely, this application produces the following very concise theorem.
\begin{thm}\label{Thm:JKS}
 The trinomial $\G_{p,b}(x)$ is not monogenic if and only if 
\begin{equation*}
G(x) \quad \mbox{and}\quad
H(x):=\dfrac{bx^p+b+\left(-bx-b\right)^{p}}{p}
\end{equation*}
are not coprime in $\F_{p}[x]$.
\end{thm}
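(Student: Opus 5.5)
The plan is to apply the theorem of Jakhar, Khanduja and Sangwan \cite{JKS2} to the trinomial $f(x)=x^{n}+Ax^{m}+B$ with $n=2p$, $m=p$ and $A=B=b$. By \eqref{Eq:Dis-Dis} and \eqref{Swan}, index$(\G_{p,b})$ can only be divisible by primes dividing $p\,b\,(b-4)$. That theorem gives, for each prime $q$ dividing $\Delta(f)$, a case-by-case criterion (according to whether $q$ divides $A$, $B$, $m$, $n$) for $q\nmid$ index$(f)$. So I would run through the primes $q\mid pb(b-4)$ and show that every $q\ne p$ automatically fails to divide the index. For $q=p$, I would show that the criterion is exactly the coprimality of $G$ and $H$ in $\F_p[x]$.

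\textbf{Step 1: primes $q\mid b$.} Here $q\mid A$ and $q\mid B$, and the relevant condition is $q^2\nmid B$. This holds because $b$ is squarefree by \eqref{Eq:Ass}; equivalently, $\G_{p,b}$ is $q$-Eisenstein. This also covers $q=2$ when $b$ is even, since then $b\equiv b-4\equiv 2\pmod 4$.

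\textbf{Step 2: odd primes $q\mid b-4$ with $q\ne p$.} Now $q\nmid AB$ and $q\nmid mn$. In this case the criterion of \cite{JKS2} reduces to a condition of the form $q^2\nmid$ (a quantity of the shape $n^{n}B^{n-m}-(-1)^{m}m^{m}(n-m)^{n-m}A^{n}$, after dividing out $d=\gcd(n,m)=p$). With our parameters, I expect this quantity to be, up to powers of $p$ and $b$ that are units at $q$, equal to $4^{p}-b^{p}$. Since $q\mid b-4$ and $q\ne p$, lifting the exponent gives $v_q(b^p-4^p)=v_q(b-4)=1$, because $b-4$ is squarefree. Hence $q\nmid$ index. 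The precise normalisation in \cite{JKS2} must be checked carefully here, but I expect only the factor $b^p-4^p$ to matter at $q$.

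\textbf{Step 3: the prime $q=p$.} First suppose $p\nmid b$. Then $p\mid m$ and $p\mid n$ with $p\nmid AB$, and \cite{JKS2} states the criterion in Dedekind form. Write $f(x)\equiv (x^2+bx+b)^p=G(x)^p\pmod p$ and set
\[
\frac{f(x)-G(x)^p}{p}.
\]
Then $p\mid$ index$(f)$ if and only if this polynomial, reduced mod $p$, shares a factor with $G$ in $\F_p[x]$. Modulo $G(x)$ we have $x^2\equiv -bx-b$, so $x^{2p}\equiv(-bx-b)^p$. After reducing mod $G$, the quotient therefore becomes
\[
\frac{bx^p+b+(-bx-b)^p}{p}=H(x).
\]
This reduction is legitimate because $f-(-bx-b)^p-bx^p-b$ is divisible by $p\,G$ in $\Z[x]$, up to the usual care about the binomial expansion of $G^p$. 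It yields exactly the coprimality of $G$ and $H$.

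If instead $p\mid b$ (so $\delta=0$), then Step 1 already shows $p\nmid$ index. In this case one must interpret the statement consistently, for instance by checking that $G\equiv x^2$ and that $H$ is nonzero at $0$ mod $p$ after clearing the factor.

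\textbf{Main obstacle.} I expect the hardest part to be matching the exact form of the conditions in \cite{JKS2} (their $n_1,m_1$ normalisations and sign conventions) to Steps 2 and 3, particularly the degenerate cases $p\mid b$ and $p\mid b-4$. For these I would verify directly with Dedekind's criterion, using the factorisation $G\equiv x^2$ or $G\equiv (x+b/2)^2\pmod p$.
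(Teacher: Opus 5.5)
Your proposal is correct and follows the paper's route, which is simply to apply the Jakhar--Khanduja--Sangwan criterion: for $q=p\nmid b$ (including the case $p\mid b-4$), their case $q\nmid AB$, $q\mid m$, with $n=2p=s'p$, $m=p=sp$, $s'=2$, $s=1$, is literally the coprimality of $G$ and $H$ modulo $p$, so your Dedekind re-derivation in Step~3 is optional. Two small fixes: first, in the case $q\nmid ABm$ the relevant quantity is $B^{n_1-m_1}n^{n_1}-(-1)^{m_1}A^{n_1}m^{m_1}(m-n)^{n_1-m_1}=p^2b(4-b)$ (with $n_1=2$, $m_1=1$), not $4^p-b^p$, so Step~2 follows at once from $b-4$ being squarefree and no lifting the exponent is needed; second, in Step~3 it is $x^{2p}-(-bx-b)^p-G(x)^p$, not $x^{2p}-(-bx-b)^p$, that is divisible by $pG(x)$ in $\Z[x]$ (expand $(G+(-bx-b))^p$), and this is exactly what gives $(f-G^p)/p\equiv H \pmod{G}$.
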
 The following refinement of Theorem \ref{Thm:JKS} will be a useful tool.
 \begin{cor}\label{Cor:JKS}
 The trinomial $\G_{p,b}(x)$ is not monogenic if and only if
 \[\G_{p,b}(\theta)\equiv 0 \pmod{p^2} \quad \mbox{for some $\theta\in \{\alpha,\beta\}$}.\] 
 \end{cor}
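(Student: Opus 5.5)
The plan is to rewrite $\G_{p,b}(\theta)$ exactly in terms of $H(\theta)$ and then transfer the coprimality condition of Theorem~\ref{Thm:JKS} to a congruence at a root of $G(x)$.

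\textbf{Step 1: an exact identity.} Let $\theta\in\{\alpha,\beta\}$, so $\theta^2=-b\theta-b$. Then
\[
\G_{p,b}(\theta)=(\theta^2)^p+b\theta^p+b=(-b\theta-b)^p+b\theta^p+b=pH(\theta).
\]
This holds exactly in $\Z[\theta]$, and $H(x)\in\Z[x]$ because $(-bx-b)^p\equiv -b^px^p-b^p\equiv -bx^p-b \pmod p$ by Fermat's little theorem. Consequently $\G_{p,b}(\theta)\equiv 0\pmod{p^2}$ if and only if $H(\theta)\equiv 0\pmod p$. It remains to show that this last condition holds for some $\theta$ exactly when $\overline{G}$ and $\overline{H}$ share a factor in $\F_p[x]$, and then apply Theorem~\ref{Thm:JKS}. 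Congruences are read in the ring of integers of $\Q(\alpha)$, or equivalently at a prime $\mathfrak p$ above $p$ (in $\Z_p[\theta]$).

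\textbf{Step 2: the case $\delta=-1$.} Here $\overline{G}$ is irreducible over $\F_p$, and $\Z[\alpha]/p\Z[\alpha]\cong\F_p[x]/(\overline{G})$ is the field $\F_{p^2}$. Hence $H(\alpha)\equiv0\pmod p$ if and only if $\overline{G}\mid\overline{H}$. Since $\overline{G}$ is irreducible, this is equivalent to $\gcd(\overline{G},\overline{H})\ne1$. The statement for $\beta$ follows by applying Galois conjugation.

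\textbf{Step 3: the case $\delta=1$.} Here $\overline{G}=(x-\bar a)(x-\bar c)$ with $\bar a\ne\bar c$. By Hensel's lemma, $\alpha,\beta\in\Z_p$ with $\alpha\equiv a$ and $\beta\equiv c \pmod p$. Equivalently, $p$ splits and the two primes above $p$ correspond to $\alpha$ and $\beta$. The polynomials $\overline{G}$ and $\overline{H}$ fail to be coprime exactly when $\overline{H}(\bar a)=0$ or $\overline{H}(\bar c)=0$, that is, when $H(\alpha)\equiv0$ or $H(\beta)\equiv 0\pmod p$. This is why the corollary is phrased with ``for some $\theta$''.

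\textbf{Step 4: the case $\delta=0$, the main obstacle.} Here $\overline{G}=(x+\bar b/2)^2$, and the two notions could diverge: $H(\alpha)\equiv0 \pmod p$ in $\Z[\alpha]$ requires $(x+b/2)^2\mid\overline{H}$, which is stronger than non-coprimality. The congruence must therefore be read modulo the ramified prime $\mathfrak p$, so that $\mathfrak p^2=p$ adjusts the exponent count. Alternatively, I would first check directly that $\overline{H}(-b/2)\ne0$ when $p\mid D$. Since $p\nmid b$ (as $b$ and $b-4$ are squarefree and coprime to each other away from $2$), $p\mid b-4$ gives $-b/2\equiv-2$. Then
\[
pH(-2)=b(-2)^p+b+(b)^p,
\]
and evaluating modulo $p^2$ with $b\equiv4 \pmod p$ and $b-4$ squarefree should show that $p\nmid H(-2)$. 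This makes both sides of the equivalence false in this case. I expect this computation to be the most delicate point, together with fixing the precise meaning of ``$\bmod\ p^2$'' in the ramified case.
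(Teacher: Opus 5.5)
Your Step 1 is the paper's computation in exact form. The paper writes $-b\theta-b=\theta^2+pk$ with $k\in\Z$, which effectively lets $\theta$ be a lift of a root of $\overline{G}$; this is also how the corollary is later applied at $\theta=0$ and $\theta=-2$ when $\delta=0$. It then obtains $pH(\theta)\equiv\G_{p,b}(\theta)\pmod{p^2}$ and treats ``$\overline{G},\overline{H}$ not coprime $\Leftrightarrow$ $pH(\theta)\equiv 0\pmod{p^2}$ for some $\theta$'' as evident. Under that lift reading all cases are uniform: for $\delta=0$, $\overline{G}=(x-\bar\theta)^2$ and non-coprimality is just $\overline{H}(\bar\theta)=0$. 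Your exact-root reading gives an exact identity, but the meaning of the congruence then depends on how $p$ factors. That is why you need the case split, which usefully supplies the step the paper skips.

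Steps 2 and 3 are fine, but your opening sentence is wrong. Reading congruences in $\Z_K$ is not equivalent to reading them at one prime above $p$ when $\delta=1$. In $\Z_K$, $H(\alpha)\equiv 0\pmod p$ forces $\overline{G}\mid\overline{H}$. Suppose exactly one of $\overline{H}(\bar a),\overline{H}(\bar c)$ vanishes; by Lemma~\ref{Lem:Equal} this is the non-monogenic case with $V_p+b\not\equiv0\pmod{p^2}$, e.g.\ $(p,b)=(37,7)$ in Table~\ref{T:1}. Then neither $\G_{p,b}(\alpha)$ nor $\G_{p,b}(\beta)$ lies in $p^2\Z_K$. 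The single embedding into $\Z_p$ that Step 3 actually uses is the reading that makes the statement true.

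The genuine gap is in Step 4: the claim $p\nmid b$ is false. The hypothesis $\delta=0$ only gives $p\mid b(b-4)$, and squarefreeness only prevents $p$ from dividing both factors. The case $p\mid b$ does occur (e.g.\ $b=p=3$, $b=p=5$, or $b=6$, $p=3$), and your argument never treats it. It is easily closed. There $\overline{G}=x^2$ and $H(0)=(b-b^p)/p=(b/p)(1-b^{p-1})$, which is a unit modulo $p$ because $p\,\|\,b$. So $\overline{G},\overline{H}$ are coprime and $\G_{p,b}(\alpha)=pH(\alpha)\notin p^2\Z_K$.

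For $p\mid b-4$ your sketch does go through. Write $b=4+pt$ with $p\nmid t$; then $b^p\equiv 4^p$ and $pH(-2)=b(1-2^p)+b^p\equiv(2^p-2)^2+pt(1-2^p)\equiv -pt\not\equiv 0\pmod{p^2}$. This is the same computation the paper carries out in proving item~\eqref{I0:Main Thm} of Theorem~\ref{Thm:Main}.

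Finally, discard your first idea in Step 4 of reading the congruence modulo powers of the ramified prime $\mathfrak{p}$. Since $p\Z_K=\mathfrak{p}^2$ and $\G_{p,b}(\alpha)=pH(\alpha)$, non-coprimality is equivalent to $\mathfrak{p}^3\mid \G_{p,b}(\alpha)$. That matches neither $\mathfrak{p}^2$ (automatic) nor $\mathfrak{p}^4=(p^2)$. In your framework, only the ``both sides false'' verification handles $\delta=0$.
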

 \begin{proof}
  By Theorem \ref{Thm:JKS}, $\G_{p,b}(x)$ is not monogenic if and only if $pH(\theta)\equiv 0 \pmod{p^2}$ for some $\theta\in \{\alpha,\beta\}$. Since $G(\theta)\equiv 0 \pmod{p}$, we have that $-b\theta-b=\theta^2+pk$ for some $k\in \Z$. Consequently,  
  \[pH(\theta)=b\theta^p+b+(-b\theta-b)^p\equiv b\theta^p+b+(\theta^2+pk)^p\equiv \G_{p,b}(\theta) \pmod{p^2}.\qedhere\]
 \end{proof}
 The next theorem, due to Kaur, Kumar and Remete \cite{KKR}, addresses the monogenicity of compositions, and plays a crucial role in the proof of Theorem \ref{Thm:Main2}.  
 \begin{thm}\label{Thm:KKR}
   Let $f(x)\in \Z[x]$ be monic, and let $k\ge 2$ be an integer such that $F(x):=f(x^k)$ is irreducible over $\Q$. Then $F(x)$ is monogenic if and only if all of the following conditions are true:
   \begin{enumerate}
     \item \label{I1:KKR} $f(x)$ is monogenic,
     \item \label{I2:KKR} $p$ does not divide the {\rm index}($F$) for every prime divisor $p$ of $k$, 
     \item \label{I3:KKR} $f(0)$ is squarefree.
   \end{enumerate}  
 \end{thm}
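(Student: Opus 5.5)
The plan is to work in the tower $\Q\subseteq K:=\Q(\eta)\subseteq L:=\Q(\theta)$, where $F(\theta)=0$ and $\eta:=\theta^k$, so that $f(\eta)=0$. The aim is to split $\operatorname{index}(F)$ into a contribution from $f$ and a relative contribution from the extension $L/K$. Since $F$ is irreducible of degree $kn$ with $n=\deg f$, $f$ is irreducible, $[L:K]=k$, and $x^k-\eta$ is the minimal polynomial of $\theta$ over $K$. The elements $\theta^i\eta^j$ with $0\le i<k$ and $0\le j<n$ are exactly the powers $\theta^m$ with $0\le m<kn$. Hence $\Z[\theta]=\Z[\eta][\theta]=\bigoplus_{i<k}\Z[\eta]\theta^i$, and similarly $\Z_K[\theta]=\bigoplus_{i<k}\Z_K\theta^i$. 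This gives the factorization
\[
\operatorname{index}(F)=[\Z_L:\Z_K[\theta]]\cdot[\Z_K[\theta]:\Z[\eta][\theta]]=[\Z_L:\Z_K[\theta]]\cdot \operatorname{index}(f)^k .
\]
Consequently $F$ is monogenic if and only if $f$ is monogenic and $\Z_K[\theta]=\Z_L$. This already shows that condition (1) is necessary, and it reduces everything else to a local question about the relative order $\Z_K[\theta]$.

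Next I would analyse maximality of $\Z_K[\theta]$ prime by prime in $K$, using Dedekind's criterion over the Dedekind domain $\Z_K$ applied to $x^k-\eta$. Its relative discriminant is $\pm k^k\eta^{k-1}$. So only primes $\mathfrak P$ of $K$ dividing $k\eta$ matter, and I would treat them in two groups.
\begin{itemize}
\item \emph{Primes $\mathfrak P\mid\eta$ lying over rational primes $p\nmid k$.} Here $x^k-\eta\equiv x^k \pmod{\mathfrak P}$, and Dedekind's criterion says $\Z_K[\theta]$ is $\mathfrak P$-maximal if and only if $v_{\mathfrak P}(\eta)=1$.
\item \emph{Primes over $p\mid k$.} Here I would not analyse anything directly. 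Instead I would use the displayed factorization: the $p$-part of $\operatorname{index}(F)$ is controlled entirely by condition (2), which also absorbs the $p$-part of $\operatorname{index}(f)$.
\end{itemize}

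The main step is to show that, assuming $f$ is monogenic, the condition ``$v_{\mathfrak P}(\eta)\le 1$ for all $\mathfrak P\mid\eta$'' is equivalent to $f(0)$ being squarefree. Fix $p\mid f(0)$ and write $f\equiv x^e g \pmod p$ with $x\nmid g$. Dedekind's criterion for $f$ at $p$ forces $p^2\nmid f(0)$ whenever $e\ge 2$. Then the Newton polygon of $f$ at $p$ has an $x$-adic segment from $(0,1)$ to $(e,0)$, which produces a single prime $\mathfrak P$ over $p$ with $e(\mathfrak P|p)=e$ and $v_{\mathfrak P}(\eta)=1$. If instead $e=1$, then $\mathfrak P$ is unramified and $v_{\mathfrak P}(\eta)=v_p(N(\eta))=v_p(f(0))$. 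In either case, $v_{\mathfrak P}(\eta)=1$ for all such $\mathfrak P$ exactly when $p^2\nmid f(0)$.

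For primes $p\mid k$ with $p^2\mid f(0)$, I would show directly that $p\mid\operatorname{index}(F)$, so that condition (3) is also necessary at these primes. Indeed $F\equiv x^{km}\bar g(x^k)\pmod p$ with $km\ge 2$ and $p^2\mid F(0)$, and Dedekind's criterion applied to $F$ at the factor $x$ then fails.

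Assembling the pieces gives the claimed equivalence. I expect the main obstacle to be the bookkeeping at primes $p\mid k$ that also divide $f(0)$, where the Eisenstein-type local argument is unavailable. That is why I route those primes through condition (2) and the direct Dedekind check on $F$.
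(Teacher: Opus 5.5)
The paper gives no proof of this statement. It quotes it from Kaur, Kumar and Remete \cite{KKR} and uses it only as a black box in the proof of Theorem~\ref{Thm:Main2}, so there is no argument in the paper to compare against. Your proof is correct and self-contained.

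Its key device is the tower $\Q\subseteq K\subseteq L$ together with $\Z[\theta]=\bigoplus_{i<k}\Z[\eta]\theta^i$. This gives the exact identity $\operatorname{index}(F)=[\Z_L:\Z_K[\theta]]\cdot\operatorname{index}(f)^k$, which makes the necessity of (1) immediate. It also reduces all primes not dividing $k$ to the relative Dedekind test for $x^k-\eta$, via the standard fact that the $p$-part of $[\Z_L:\Z_K[\theta]]$ is trivial iff $\Z_K[\theta]$ is $\mathfrak P$-maximal for every $\mathfrak P\mid p$. The only genuinely hard primes are those of $K$ over $p\mid k$ with $\mathfrak P\nmid\eta$, where $x^k-\eta$ is inseparable modulo $\mathfrak P$, and these are exactly what hypothesis (2) absorbs.

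Three parts can be streamlined.
\begin{enumerate}
\item Your direct Dedekind check for $p\mid k$ works for every prime $p$ with $p^2\mid f(0)$. Since $F(0)=f(0)$ and $x^k$ divides $F$ modulo $p$ with $k\ge 2$, the factor $x$ always violates Dedekind's criterion. So necessity of (3) needs neither the tower, nor monogenicity of $f$, nor the Newton polygon, and the bookkeeping obstacle you anticipate does not arise.
\item For sufficiency, the norm identity $v_p(f(0))=v_p\bigl(N_{K/\Q}(\eta)\bigr)=\sum_{\mathfrak P\mid p}d_{\mathfrak P}\,v_{\mathfrak P}(\eta)$, with residue degrees $d_{\mathfrak P}\ge 1$, gives $v_{\mathfrak P}(\eta)\le 1$ at once when $f(0)$ is squarefree. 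This avoids the case split on $e$ and the appeal to monogenicity of $f$.
\item Your relative computation at $\mathfrak P\mid\eta$ (with $x^k-\eta\equiv x^k$ and $T=\eta/\pi$) is valid whatever the residue characteristic, so the restriction $p\nmid k$ in your first bullet is unnecessary there.
\end{enumerate}
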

 
\begin{lemma}\label{Lem:Equal}
$\G_{p,b}(\alpha)\equiv \G_{p,b}(\beta)\Mod{p^2}$ if and only if $V_p+b\equiv 0\Mod{p^2}$.  
\end{lemma}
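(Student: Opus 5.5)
We work in the ring $\Z[\alpha]$, where $\alpha,\beta$ are the zeros of $G(x)$, and compute the difference $\G_{p,b}(\alpha)-\G_{p,b}(\beta)$ directly. Since $\alpha^2=-b\alpha-b$, we have $\G_{p,b}(\alpha)=\alpha^{2p}+b\alpha^p+b$, and similarly for $\beta$. Hence
\[
\G_{p,b}(\alpha)-\G_{p,b}(\beta)=(\alpha^p-\beta^p)\bigl(\alpha^p+\beta^p+b\bigr)=(\alpha^p-\beta^p)(V_p+b).
\]
The factorization itself is trivial. The real content is to show that $\alpha^p-\beta^p$ is a unit modulo $p$, or at least that it contributes no factor of $p$. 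Then the congruence modulo $p^2$ (interpreted in $\Z[\alpha]$, or equivalently in the ring of integers of $\Q(\sqrt{D})$) is equivalent to $p^2\mid V_p+b$. Since $V_p+b\in\Z$, divisibility by $p^2$ in $\Z[\alpha]$ is the same as divisibility in $\Z$.

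To handle $\alpha^p-\beta^p$, write $\alpha^p-\beta^p=(\alpha-\beta)U_p$, where $U_p=(\alpha^p-\beta^p)/(\alpha-\beta)\in\Z$ is the Lucas sequence of the first kind, and $(\alpha-\beta)^2=b^2-4b=D$. The argument splits according to whether $p$ divides $D$.

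When $p\nmid D$, the standard congruence $U_p\equiv \left(\frac{D}{p}\right)\pmod p$ shows that $U_p$ is prime to $p$. Also $\alpha-\beta$ is prime to $p$ because its square $D$ is. So $\alpha^p-\beta^p$ is invertible modulo $p$, and the equivalence follows.

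When $p\mid D$, the factor $\alpha-\beta$ is not a unit, and this is the step I expect to be the main obstacle. Here I would use that $D=b(b-4)$ with both $b$ and $b-4$ squarefree, so $p\,\|\,D$. Moreover $p\mid b$ and $p\mid b-4$ cannot both hold for odd $p$, since $p\nmid 4$. We also need $U_p$ itself: from $U_p\equiv D^{(p-1)/2}\pmod p$, we get $p\mid U_p$, which makes things worse.

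So in this case I would instead compute directly. Expanding $\alpha^p-\beta^p$ binomially with $\alpha,\beta=(-b\pm\sqrt{D})/2$ gives
\[
\alpha^p-\beta^p=\frac{2}{2^p}\sum_{k \text{ odd}}\binom pk(-b)^{p-k}(\sqrt D)^k.
\]
Every term with $k<p$ carries a factor $p\sqrt D$, and the $k=p$ term is $(\sqrt D)^p$, so the whole expression is divisible by $p\sqrt{D}$ at least. Rather than fight with this, a cleaner route in this case is to compare both sides modulo $p^2$ directly. With $p\mid D$, either $p\mid b$ or $p\mid b-4$. In the first case $\G_{p,b}(\alpha)\equiv\alpha^{2p}\pmod{p}$, and in the second one checks the conditions explicitly.

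If that becomes messy, I would note that the lemma is presumably only applied when $p\nmid D$, as in Theorem \ref{Thm:Main}\eqref{I1:Main Thm}--\eqref{I2:Main Thm} and Corollary \ref{Cor1:Main}. The core argument is then the factorization combined with the unit property of $(\alpha-\beta)U_p$ modulo $p$.
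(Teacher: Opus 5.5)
Your argument for $p\nmid D$ is correct and is the paper's proof: factor $\G_{p,b}(\alpha)-\G_{p,b}(\beta)=(\alpha^p-\beta^p)(V_p+b)$ and cancel $\alpha^p-\beta^p$. Your justification via $(\alpha^p-\beta^p)^2=DU_p^2$ with $U_p\equiv\left(\frac{D}{p}\right)\pmod p$ supplies the invertibility modulo $p$ that this cancellation needs, which is stronger than the paper's stated $\beta^p-\alpha^p\not\equiv 0\pmod{p^2}$.

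Drop the $p\mid D$ branch rather than try to finish it. The paper's proof assumes $D\not\equiv 0\pmod p$, and the lemma is only used when $\delta=\pm1$. Without that hypothesis the statement is false: for $p=b=3$ one gets $V_3=0$, so $V_3+3\not\equiv 0\pmod 9$, while $\G_{3,3}(\alpha)-\G_{3,3}(\beta)=3(\alpha^3-\beta^3)=18(2\alpha+3)$ is divisible by $9$ in $\Z[\alpha]$.
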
 
\begin{proof}
Observe that 
  \[\G_{p,b}(\beta)-\G_{p,b}(\alpha)=(\beta^p-\alpha^p)(\alpha^p+\beta^p+b).\] Since $D\not \equiv 0 \pmod{p}$, it follows that  $\beta^p-\alpha^p\not \equiv 0 \pmod{p^2}$. Therefore, 
  \begin{equation*}
  \begin{gathered}
  \G_{p,b}(\alpha)\equiv \G_{p,b}(\beta)\pmod{p^2}\\
  \mbox{if and only if}\\  
  \alpha^p+\beta^p+b=V_p+b\equiv 0 \pmod{p^2}.\qedhere
  \end{gathered}
  \end{equation*}  
\end{proof}
 
 The next result gives us some information regarding $\G_{p,b}(\theta)$ modulo $p^2$, where $\theta\in \{\alpha,\beta\}$, according to when $D$ is, or is not, a nonzero square modulo $p$.
  %the non-monogenicity of $\G_{p,b}(x)$ according to when $D$ is, or is not, a nonzero square modulo $p$. %how the behavior of the non-monogenicity of $\G_{p,b}(x)$ differs in the two cases, according to when $D$ is, or is not, a nonzero square modulo $p$.  
 \begin{cor}\label{Cor:delta} Suppose that $\G_{p,b}(\theta)\equiv 0 \pmod{p^2}$ for some $\theta\in \{\alpha,\beta\}$, so that $\G_{p,b}(x)$ is not monogenic by Corollary \ref{Cor:JKS}.
 \begin{enumerate}
  \item \label{delta:I2} If $\delta=1$, then $\G_{p,b}(-\theta-b)\equiv 0 \pmod{p^2}$ if and only if $V_p+b\equiv 0\pmod{p^2}$.
 \item \label{delta:I1} If $\delta=-1$, then $\G_{p,b}(-\theta-b)\equiv 0 \pmod{p^2}$ and $V_p+b\equiv 0\pmod{p^2}$. 
     \end{enumerate} 
  \end{cor}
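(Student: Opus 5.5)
The plan is to reduce both parts to Lemma \ref{Lem:Equal}. The starting observation is that, since $\alpha+\beta=-b$, the element $-\theta-b$ is exactly the other zero of $G(x)$. So if $\theta=\alpha$ then $-\theta-b=\beta$, and vice versa. The statement therefore concerns whether the congruence $\G_{p,b}(\theta)\equiv 0 \pmod{p^2}$ transfers from one zero of $G(x)$ to its conjugate.

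For part \eqref{delta:I2}, no extra input should be needed. Given $\G_{p,b}(\theta)\equiv 0\pmod{p^2}$, the condition $\G_{p,b}(-\theta-b)\equiv 0 \pmod{p^2}$ is equivalent to $\G_{p,b}(\alpha)\equiv \G_{p,b}(\beta)\pmod{p^2}$. By Lemma \ref{Lem:Equal}, this holds if and only if $V_p+b\equiv 0 \pmod{p^2}$.

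The remaining point is to say in which ring these congruences live. When $\delta=1$, $p$ splits in $\Q(\alpha)=\Q(\sqrt{D})$, and $G(x)$ has two distinct roots modulo $p$. Hensel's lemma lifts them to $p$-adic (or modulo $p^2$ integer) roots $\alpha,\beta$. This is the setting implicitly used in the proof of Corollary \ref{Cor:JKS}, where $\theta$ is treated as an integer modulo $p^2$. In that setting the argument above is just Lemma \ref{Lem:Equal}, whose proof uses $\beta^p-\alpha^p\not\equiv 0\pmod{p}$, valid because $p\nmid D$.

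For part \eqref{delta:I1}, the key extra fact is that when $\delta=-1$ the prime $p$ is inert in $K=\Q(\sqrt D)$, so the congruence modulo $p^2$ is modulo the ideal $p^2\Z_K$. The nontrivial automorphism $\sigma$ of $K$ interchanges $\alpha$ and $\beta$ and fixes the ideal $p^2\Z_K$, since that ideal is generated by a rational integer. The coefficients of $\G_{p,b}$ are rational, so applying $\sigma$ to $\G_{p,b}(\theta)\in p^2\Z_K$ gives $\G_{p,b}(\sigma\theta)=\G_{p,b}(-\theta-b)\in p^2\Z_K$. This proves the first conclusion. The two values are then congruent modulo $p^2$ (both are $0$), so Lemma \ref{Lem:Equal} yields $V_p+b\equiv 0\pmod{p^2}$.

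The main obstacle is making the interpretation of "$\pmod{p^2}$" consistent across the two cases, and checking that the step "$\beta^p-\alpha^p\not\equiv 0$" in Lemma \ref{Lem:Equal} is legitimate there. In the split case one works $p$-adically in one completion, where the congruence need not pass to the conjugate. In the inert case the conjugation argument works precisely because $p\Z_K$ is prime and Galois-stable. I would first verify that $\beta^p-\alpha^p$ is a unit at $p$ in both cases: modulo $p$ it is congruent to $(\beta-\alpha)^p$, and $(\beta-\alpha)^2=D$ is a $p$-unit. This justifies the cancellation in Lemma \ref{Lem:Equal} uniformly.
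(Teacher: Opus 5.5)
Your proposal is correct and follows essentially the paper's proof. Part (1) is read off directly from Lemma~\ref{Lem:Equal}; for part (2) the paper applies the automorphism $\theta\mapsto-\theta-b$ of $\RR=(\Z/p^2\Z)[x]/(G(x))$, which is just the reduction of your Galois conjugation on $\Z_K/p^2\Z_K$ (the two rings are isomorphic since $p\nmid D$), and then invokes Lemma~\ref{Lem:Equal} again. Your added check that $\beta^p-\alpha^p\equiv(\beta-\alpha)^p$ is a unit modulo $p$ is in fact the right justification for the cancellation in Lemma~\ref{Lem:Equal}, whose proof only asserts $\beta^p-\alpha^p\not\equiv 0\pmod{p^2}$.
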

  \begin{proof}
  Item \eqref{delta:I2} follows immediately from Lemma \ref{Lem:Equal}. 
  For item \eqref{delta:I1}, if $\delta=-1$, then $G(x)$ is irreducible in $\F_p[x]$. Let $\RR=(\Z/p^2\Z)[x]/(\G(x))$, 
  and let $\sigma$ be the unique automorphism of $\RR$ that fixes pointwise the elements of $\Z/p^2\Z$ and maps $\theta$ to $-\theta-b$. Then  
  \[\G_{p,b}(-\theta-b)\equiv \sigma(\G_{p,b}(\theta))\equiv \sigma(0)\equiv 0 \pmod{p^2},\] and $V_p+b\equiv 0\pmod{p^2}$ by Lemma \ref{Lem:Equal}.
 \end{proof}

%The next result follows from Definition \ref{Def:Res}, Corollary \ref{Cor:JKS} and Proposition \ref{Prop:delta}.
\begin{lemma}\label{Lem:Res1}
The trinomial $\G_{p,b}(x)$ is not monogenic if and only if 
\[Res(G,\G_{p,b})\equiv \left\{\begin{array}{cl}
  0 \pmod{p^4} & \mbox{if $\delta=-1$}\\
  0 \pmod{p^3} & \mbox{if $\delta=1$.}
\end{array}\right.\]
\end{lemma}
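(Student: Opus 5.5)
The plan is to compute the resultant directly from Definition~\ref{Def:Res} and then compare $p$-adic valuations, using Corollary~\ref{Cor:JKS} as the criterion for non-monogenicity. Throughout, $\delta\ne 0$, i.e.\ $p\nmid D$; this is implicit in the statement, since it only treats $\delta=\pm1$. Since $G$ has degree $2$ and $\G_{p,b}$ has degree $2p$, the sign $(-1)^{4p}$ is $+1$, so
\[Res(G,\G_{p,b})=\G_{p,b}(\alpha)\,\G_{p,b}(\beta),\]
which is a rational integer. I will work in the ring of integers $\mathcal{O}$ of the completion of $\Q(\alpha)$ at a prime above $p$. Because $p\nmid D=\Delta(G)$, this completion is either $\Q_p$ (when $\delta=1$) or the unramified quadratic extension of $\Q_p$ (when $\delta=-1$). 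In both cases $p$ is a uniformizer, so valuations are integers and $v_p$ restricted to $\Z$ is the usual one. A congruence $\G_{p,b}(\theta)\equiv 0\pmod{p^2}$ means $v_p(\G_{p,b}(\theta))\ge 2$ in $\mathcal{O}$.

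\textbf{Step 1: both factors are divisible by $p$.} If $\delta=1$, then $\alpha,\beta\in\Z_p$, so $\alpha^p\equiv\alpha\pmod p$. Hence $\G_{p,b}(\alpha)=G(\alpha^p)\equiv G(\alpha)=0\pmod p$, and likewise for $\beta$. If $\delta=-1$, the Frobenius automorphism $\sigma$ of $\mathcal{O}$ satisfies $\sigma(\alpha)=\beta$ and $\alpha^p\equiv\beta\pmod p$. Then $\G_{p,b}(\alpha)\equiv G(\beta)=0\pmod p$, and symmetrically for $\beta$. So in either case $v_p(\G_{p,b}(\alpha))\ge1$ and $v_p(\G_{p,b}(\beta))\ge1$.

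\textbf{Step 2: the case $\delta=1$.} The resultant has valuation
\[v_p(\G_{p,b}(\alpha))+v_p(\G_{p,b}(\beta)),\]
where each summand is at least $1$. This sum is at least $3$ if and only if at least one summand is at least $2$. By Corollary~\ref{Cor:JKS}, that happens if and only if $\G_{p,b}(x)$ is not monogenic. This gives the $p^3$ criterion.

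\textbf{Step 3: the case $\delta=-1$.} Since $\G_{p,b}$ has integer coefficients, $\sigma(\G_{p,b}(\alpha))=\G_{p,b}(\beta)$. Because $\sigma$ preserves valuation, both factors have the same valuation $v\ge1$, and the resultant has valuation $2v$. This is the conjugation argument of Corollary~\ref{Cor:delta}\eqref{delta:I1}, now carried out at the level of valuations. Thus $\G_{p,b}(x)$ is not monogenic if and only if $v\ge2$, which holds if and only if $2v\ge4$, i.e.\ $p^4\mid Res(G,\G_{p,b})$.

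\textbf{Main obstacle.} The only delicate point is to justify that the congruences modulo $p^2$ in Corollary~\ref{Cor:JKS}, which are read in $\Z[\theta]$ or in $\RR$, agree with valuation statements in the local ring $\mathcal{O}$. I will handle this by noting that $p\nmid[\mathcal{O}_{\Q(\alpha)}:\Z[\alpha]]$, because that index divides the square root of $\Delta(G)/\Delta(\Q(\alpha))$, which is prime to $p$ since $p\nmid D$. Hence $\Z[\alpha]/p^2\Z[\alpha]$ embeds in $\mathcal{O}/p^2\mathcal{O}$, and for $\delta=1$ one may choose the embedding so that $\alpha$ maps to either root. This identification is what makes the valuation arguments in Steps~2 and~3 equivalent to the criterion of Corollary~\ref{Cor:JKS}.
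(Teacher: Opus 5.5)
Your proposal is correct and follows the paper's proof. Like the paper, you write $Res(G,\G_{p,b})=\G_{p,b}(\alpha)\G_{p,b}(\beta)$ with each factor divisible by $p$, invoke Corollary~\ref{Cor:JKS} for $\delta=1$, and use the conjugation symmetry of Corollary~\ref{Cor:delta} for $\delta=-1$; your $p$-adic valuations make explicit the bookkeeping the paper leaves implicit (e.g.\ that $p^4\mid Res$ forces both factors into $p^2$).

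One small slip: for $\delta=1$, $\Z[\alpha]/p^2\Z[\alpha]\cong(\Z/p^2\Z)^2$ does not embed in $\mathcal{O}/p^2\mathcal{O}=\Z/p^2\Z$. This does not affect the argument, because what you actually use is evaluation at a single $p$-adic root, and that is correct. Alternatively, note that $\G_{p,b}(\theta)=pH(\theta)$ holds exactly for a true root $\theta\in\mathcal{O}$, which links the valuation directly to Theorem~\ref{Thm:JKS}.
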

\begin{proof}
  By Definition \ref{Def:Res}, we have that 
  \begin{equation}\label{Res1} 
   Res(G,\G_{p,b})=\G_{p,b}(\alpha)\G_{p,b}(\beta)=(\alpha^{2p}+b\alpha^p+b)(\beta^{2p}+b\beta^p+b).
    \end{equation}
    Note that each factor in \eqref{Res1} is divisible by $p$ since $G(\alpha)\equiv G(\beta)\equiv 0\pmod{p}$.  
     By Corollary \ref{Cor:JKS}, $\G_{p,b}(x)$ is not monogenic if and only if at least one factor of \eqref{Res1} is divisible by $p^2$. Corollary \ref{Cor:delta} tells us that when $\delta=-1$, then $\G_{p,b}(x)$ is not monogenic if and only if both factors in \eqref{Res1} are zero modulo $p^2$ in the ring $\RR$.  
\end{proof}
The next lemma provides a more useful form for $Res(G,\G_{p,b})$.
 \begin{lemma}\label{Lem:Res2}
 \[ Res(G,\G_{p,b})=\frac{b}{4}\left((2V_p+b^p+b)^2-D(b^{p-1}-1)^2\right).\]
  % \begin{align*}
  % Res(G,\G_{p,b})&=b(V_p+b)(V_p+b^p)+b^2(b^{p-1}-1)^2\\
  % &=\frac{b}{4}\left((2V_p+b^p+b)^2-D(b^{p-1}-1)^2\right).
  % \end{align*} 
 \end{lemma}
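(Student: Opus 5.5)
Starting from \eqref{Res1}, we have $Res(G,\G_{p,b})=\G_{p,b}(\alpha)\G_{p,b}(\beta)$, and the plan is to expand this product and rewrite every term as a symmetric function of $\alpha,\beta$, using $\alpha\beta=b$, $\alpha+\beta=-b$ and $\alpha^p+\beta^p=V_p$. Write $A=\alpha^p$, $B=\beta^p$, so that $AB=b^p$ and $A+B=V_p$. Then
\[(A^2+bA+b)(B^2+bB+b)=(AB)^2+bAB(A+B)+b(A^2+B^2)+b^2AB+b^2(A+B)+b^2.\]
Substituting $A^2+B^2=V_p^2-2b^p$ gives the polynomial expression
\[b^{2p}+b^{p+1}V_p+bV_p^2-2b^{p+1}+b^{p+2}+b^2V_p+b^2\]
in $b$ and $V_p$.

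Next, I will check that this equals the intermediate form
\[b(V_p+b)(V_p+b^p)+b^2(b^{p-1}-1)^2.\]
Expanding, $b(V_p+b)(V_p+b^p)=bV_p^2+b^{p+1}V_p+b^2V_p+b^{p+2}$ and $b^2(b^{p-1}-1)^2=b^{2p}-2b^{p+1}+b^2$. Their sum matches the expression above term by term.

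Finally, I will pass to the stated form by completing the square in $V_p$. Since $(V_p+b)(V_p+b^p)=\tfrac14\left((2V_p+b^p+b)^2-(b^p-b)^2\right)$, we get
\[b(V_p+b)(V_p+b^p)+b^2(b^{p-1}-1)^2=\frac b4\left[(2V_p+b^p+b)^2-(b^p-b)^2+4b(b^{p-1}-1)^2\right].\]
Using $(b^p-b)^2=b^2(b^{p-1}-1)^2$, the last two terms combine to $-(b^2-4b)(b^{p-1}-1)^2=-D(b^{p-1}-1)^2$, which is the claimed identity.

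No step is a real obstacle; this is pure algebra. The only care needed is the bookkeeping in the first expansion, specifically correctly replacing $A^2+B^2$ by $V_p^2-2b^p$ and $AB$ by $b^p$. Matching against the intermediate form $b(V_p+b)(V_p+b^p)+b^2(b^{p-1}-1)^2$ gives a convenient check on that expansion before completing the square.
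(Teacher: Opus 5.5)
Your proposal is correct and follows essentially the paper's route: expand $\G_{p,b}(\alpha)\G_{p,b}(\beta)$ into symmetric functions using $\alpha\beta=b$ and $\alpha^{2p}+\beta^{2p}=V_p^2-2b^p$, then complete the square in $V_p$. The only difference is cosmetic. The paper's displayed proof passes through $b\left(V_p^2+(b^p+b)V_p+b^{2p-1}+b^{p+1}-2b^p+b\right)$ rather than your factored intermediate form $b(V_p+b)(V_p+b^p)+b^2(b^{p-1}-1)^2$. Also, your temporary names $A=\alpha^p$, $B=\beta^p$ clash with the paper's later $A=2V_p+b^p+b$, $B=b^{p-1}-1$.
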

 \begin{proof}
 We use the definitions in Section \ref{Section:Intro} and \eqref{Res1} to get that $V_{2p}=V_p^2-2b^p$ and
 \begin{align*}
  Res(G,\G_{p,b})&=\G_{p,b}(\alpha)\G_{p,b}(\beta)\\
  &=(\alpha^{2p}+b\alpha^p+b)(\beta^{2p}+b\beta^p+b)\\&=(\alpha\beta)^{2p}+b(\alpha\beta)^p(\alpha^p+\beta^p)
  +b(\alpha^{2p}+\beta^{2p})\\
  &\qquad \qquad +b^2(\alpha \beta)^p+b^2(\alpha^p+\beta^p)+b^2\\
  &=b^{2p}+b^{p+1}V_p+bV_{2p}+b^{p+2}+b^2V_p+b^2\\
  %&=b\left(V_p^2+(b^p+b)V_p+b^{p+1}\right)+b^2\left(b^{2p-2}-2b^{p-1}+1\right)\\
  %&=b(V_p+b)(V_p+b^p)+b^2(b^{p-1}-1)^2\\
  &=b\left(V_p^2+(b^p+b)V_p+b^{2p-1}+b^{p+1}-2b^p+b\right)\\
  &=\frac{b}{4}\left((2V_p+b^p+b)^2-(b^2-4b)(b^{p-1}-1)^2\right).\qedhere
 \end{align*}   
 \end{proof}

\section{The Proof of Theorem \ref{Thm:Main}} %\label{Section: Proof of Theorem \ref{Thm:Main}}
\begin{proof}%[Proof of Theorem \ref{Thm:Main}]
Suppose first that $\delta=0$, so that $p$ divides $D$. Thus, either $p$ divides $b$ or $p$ divides $b-4$. The first possibility that $p$ divides $b$ makes no sense when considering base-$b$ Wieferich primes. Notwithstanding, we can still investigate the monogenicity of $\G_{p,b}(x)$ in this case. Since $G(x)\equiv x^2\pmod{p}$, we can appeal to Corollary \ref{Cor:JKS} and examine $\G_{p,b}(0)$ modulo $p^2$, which is clearly never zero since $b$ is squarefree. Hence, $\G_{p,b}(x)$ is always monogenic in this situation. The second possibility is when $p$ divides $b-4$, but not $b$, so that \[G(x)\equiv x^2+4x+4\equiv (x+2)^2\pmod{p}.\] Using Corollary \ref{Cor:JKS} in this situation, we must examine \[\G_{p,b}(-2)=(-2)^{2p}+b(-2)^p+b\] modulo $p^2$. We claim that $\G_{p,b}(-2)\not \equiv 0 \pmod{p^2}$. Assume, by way of contradiction, that $\G_{p,b}(-2)\equiv 0 \pmod{p^2}$. Then, a little algebra shows that 
\[(b-4)(2^p-1)\equiv (2^p-2)^2\pmod{p^2}.\] Since $2^p-2\equiv 0 \pmod{p}$ and $2^p-1\not \equiv 0 \pmod{p}$, it follows that $b-4\equiv 0 \pmod{p^2}$, which contradicts the fact that $b-4$ is squarefree. Thus, the claim is established, and we conclude that $\G_{p,b}(x)$ is always monogenic, which proves item \eqref{I0:Main Thm}. 

For items \eqref{I1:Main Thm} and \eqref{I2:Main Thm}, let 
\begin{equation}\label{A and B}
A:=2V_p+b^p+b\quad \mbox{and} \quad B:=b^{p-1}-1.
\end{equation} 
Let $\theta\in \{\alpha,\beta\}$. Then  
\[\G_{p,b}(\theta)=G(\theta^p)\equiv G(\theta)^p\equiv 0 \pmod{p},\]  and since $\alpha\not \equiv \beta \pmod{p}$, we get that  
\begin{align*}
 A&\equiv 2(V_p+b) \pmod{p}\\
 &\equiv 2(\alpha^p+\beta^p+b) \pmod{p}\\ 
 &\equiv \frac{2(\alpha^p-\beta^p)(\alpha^p+\beta^p+b)}{\alpha^p-\beta^p} \pmod{p}\\
 &\equiv \frac{2\left(\G_{p,b}(\alpha)-\G_{p,b}(\beta)\right)}{\alpha^p-\beta^p} \pmod{p}\\
 &\equiv 0 \pmod{p}.
\end{align*}
Note also that $B\equiv  0\pmod{p}$ by Fermat's Little theorem.

Then, from Corollary \ref{Cor:delta},  Lemma \ref{Lem:Res1} and Lemma \ref{Lem:Res2}, we have that \begin{equation}\label{Refine1}
\begin{split}
\begin{gathered}
\mbox{$\G_{p,b}(x)$ is not monogenic}\\ 
\mbox{if and only if}\\
  A^2-DB^2=\left(A-B\sqrt{D}\right)\left(A+B\sqrt{D}\right)\equiv \left\{\begin{array}{cl}
  0 \pmod{p^3} & \mbox{if $\delta=1$}\\
  0 \pmod{p^4} & \mbox{if $\delta=-1$,}
\end{array}\right.
\end{gathered}
\end{split}
\end{equation}  where 
\[A-B\sqrt{D} \equiv 0 \Mod{p^2} \quad \mbox{or} \quad A+B\sqrt{D} \equiv 0 \Mod{p^2} \quad \mbox{if $\delta=1$}\]
and 
\[A-B\sqrt{D} \equiv 0 \Mod{p^2} \quad \mbox{and} \quad A+B\sqrt{D} \equiv 0 \Mod{p^2} \quad \mbox{if $\delta=-1$}.\]  Furthermore, if $\delta=-1$, then 
\begin{equation}\label{Refine2}
\begin{split}
\begin{gathered}
\mbox{$\G_{p,b}(x)$ is not monogenic}\\
\mbox{if and only if}\\
  A\equiv 0 \pmod{p^2} \quad \mbox{and} \quad B\equiv 0 \pmod{p^2}.
\end{gathered}
\end{split} 
\end{equation} Thus, if $B=b^{p-1}-1\equiv 0 \pmod{p^2}$, it follows that  
\[2(V_p+b)=2V_p+b+b\equiv 2V_p+b^p+b=A \pmod{p^2},\]
which completes the proof of the theorem. 
\end{proof}
%Therefore, we have shown that \eqref{Refine1} and \eqref{Refine2} have established, respectively, items \eqref{I3b:Main} and \eqref{I3a:Main} of the theorem. 
\section{The Proof of Corollary \ref{Cor1:Main}}
\begin{proof}
  If $\delta=-1$, then the corollary follows immediately from item \eqref{I2:Main Thm} of Theorem \ref{Thm:Main}. So, suppose that  $\delta=1$. Observe that 
  \[2V_p+b^p+b\equiv 2(-b)+b+b\equiv 0 \pmod{p^2},\] since $V_p\equiv -b\pmod{p^2}$ and $b^{p-1}\equiv 1\pmod{p^2}$. Hence, the corollary then follows from item \eqref{I1:Main Thm} of Theorem \ref{Thm:Main}. 
\end{proof}
\section{The Proof of Corollary \ref{Cor2:Main}}
\begin{proof} 
To establish this corollary,  
we need to conduct a more-detailed analysis of \eqref{Refine1} and \eqref{Refine2} when $b\in \{2,3\}$. For integers $r$ and $m$ with $m\ge 2$, we let the notation ``$r \mmod{m}$" denote the unique integer $z\in \{0,1,2,\ldots,m-1\}$ such that $r\equiv z \pmod{m}$. That is, $r \mmod{m}=z$. 

Straightforward induction arguments show that
\begin{equation}\label{b=2 V_p}
 V_n=(-1)^{\frac{n^2+7}{8}}2^{\frac{n+1}{2}}, 
\end{equation} when $b=2$ and $n\equiv 1\pmod{2}$, and that 
\begin{equation}\label{b=3 V_p}
V_n=\left\{\begin{array}{rl}
  -3^{\frac{n+1}{2}} & \mbox{if $n \mmod{12}\in \{1,11\}$}\\[.5em]
  3^{\frac{n+1}{2}} & \mbox{if $n \mmod{12}\in \{5,7\}$},
\end{array} \right. 
\end{equation} when $b=3$. The unique structure of $\abs{V_n}$ being a power of $b$, as given in \eqref{b=2 V_p} and \eqref{b=3 V_p}, is the driving force behind the refinement of Theorem \ref{Thm:Main} when $b\in \{2,3\}$. One way to see that this phenomenon does not occur when $b\ge 5$ is to examine $V_7=-b^4v(b)$, where $v(b)=b^3-7b^2+14b-7$. It is easy to see that the equation $v(b)=-1$ has only the integer solution $b=3$, while the equation $v(b)=1$ has exactly the three solutions $b\in \{1,2,4\}$. Using the Rational Zero theorem, we confirm, when $k\ge 1$, that the equation $v(b)=-b^k$ has no integer solutions, and that the equation $v(b)=b^k$ has only the integer solution $b=1$. 

 Since all possibilities for $(p,b)$, where $b\in \{2,3\}$ and $p$ is a prime not dividing $D$, can be addressed in a similar manner, we provide details only for the case $b=3$.
Suppose first that $B=3^{p-1}-1\equiv 0\pmod{p^2}$. Then  
  \begin{equation}\label{3 cong}
 3^{\frac{p+1}{2}}\equiv \left\{\begin{array}{cl}
 3 \pmod{p^2} & \mbox{if $p \mmod{12}\in \{1,11\}$}\\[.5em]
 -3 \pmod{p^2} & \mbox{if $p \mmod{12}\in \{5,7\}$},
\end{array}\right.
\end{equation}
  by Euler's criterion and quadratic reciprocity. Thus,  
in any case of $p\mmod{12}$, we see from \eqref{A and B}, \eqref{b=3 V_p} and \eqref{3 cong} that $V_p\equiv -3\pmod{p^2}$, which implies that 
\[A\equiv 2(-3)+3+3 \equiv 0 \pmod{p^2}.\]
%which implies that $V_p\equiv -b\pmod{p^2}$. 
Hence, $\G_{p,3}(x)$ is not monogenic from Theorem \ref{Thm:Main}.

Conversely, suppose that $\G_{p,3}(x)$ is not monogenic. If $p\mmod{12}\in \{5,11\}$, then $\delta=-1$ since $D=-3$, and we have that $B\equiv 0 \pmod{p^2}$ from item \eqref{I2:Main Thm} of Theorem \ref{Thm:Main}. So, suppose $p\mmod{12}\in \{1,7\}$ so that $\delta=1$. 
By item \eqref{I1:Main Thm} of Theorem \ref{Thm:Main}, it follows that   
\[(A-B\sqrt{D})(A+B\sqrt{D})\equiv 0 \pmod{p^3},\] where 
\[A-B\sqrt{D} \equiv 0 \Mod{p^2} \quad \mbox{or} \quad A+B\sqrt{D} \equiv 0 \Mod{p^2}.\] Suppose that 
\begin{equation}\label{B cong}
  A-B\sqrt{D} \equiv 0 \pmod{p^2}.
\end{equation} 
Solving the congruence \eqref{B cong} for $B$ yields
\[B\equiv \frac{A}{\sqrt{D}}\equiv \frac{2V_p+3^p+3}{\sqrt{D}} \pmod{p^2}.\] Thus,
\[B\equiv \left\{\begin{array}{cl}
 \dfrac{2\left(-3^{\frac{p+1}{2}}\right)+3^p+3}{\sqrt{D}}\equiv \dfrac{3\left(3^{\frac{p-1}{2}}-1\right)^2}{\sqrt{D}}\equiv 0  \Mod{p^2} & \ \mbox{if $p \mmod{12}=1$}\\[1em] 
  \dfrac{2\left(3^{\frac{p+1}{2}}\right)+3^p+3}{\sqrt{D}} \equiv \dfrac{3\left(3^{\frac{p-1}{2}}+1\right)^2}{\sqrt{D}}\equiv 0  \Mod{p^2} & \ \mbox{if $p \mmod{12}=7$.}
\end{array}\right.\]
 If $A+B\sqrt{D} \equiv 0 \pmod{p^2}$, we arrive at the same conclusion.
 \end{proof}  

\section{The Proof of Theorem \ref{Thm:Main2}}
\begin{proof} Since $b$ and $b-4$ are squarefree, it follows that $D/\gcd(2,b)^2$ is squarefree and that $b\mmod{4}=2$ if $b$ is even. Thus, $G(x)$ is monogenic by \cite[Theorem 2.2]{JonesNYJM2026}. Since $G(0)=b$ is squarefree and $\G_{n,b}(x)=G(x^n)$, we conclude from Theorem \ref{Thm:KKR} that $\G_{n,b}(x)$ is monogenic if and only if $G(x^p)=\G_{p,b}(x)$ is monogenic for every prime divisor $p$ of $n$.  
\end{proof}

\section{Examples}

With $A$ and $B$ as defined in \eqref{A and B}, and $V_p$ as defined in Definition \eqref{Def:Lucas}, Table \ref{T:1} illustrates Theorem \ref{Thm:Main} for pairs $(p,b)$ by giving the values for %, \ Res(G,G_{p,b})
\[\delta, \quad (\overline{A}, \ \overline{B}, \ \overline{V_p+b}) \quad \mbox{and} \quad  (A^2-DB^2) \mmod{p^{N}},\] 
% \[(\G_{p,b}(\alpha), \  \G_{p,b}(\beta),\  V_p+b, \ b^{p-1}-1) \ \mmod{p^2},\] %when $\delta=1$, 
 along with the monogenicity of $\G_{p,b}(x)$, where $\overline{*}$ is $* \mmod{p^2}$ and $N=(7-\delta)/2$. 
 %Observe that 
 %\[N=\left\{\begin{array}{cl}
 %  3 & \mbox{if $\delta=1$}\\
 %  4 & \mbox{if $\delta=-1$.}
 %\end{array}\right.\] % as predicted by Corollary \eqref{Cor:JKS}. 
 
 \begin{table}[h]
 \begin{center}
\begin{tabular}{ccccc}
$(p,b)$ & $\delta$ & $(\overline{A}, \ \overline{B}, \ \overline{V_p+b})$  & $(A^2-DB^2) \mmod{p^{N}}$ &  $\G_{p,b}(x)$ is monogenic  \\ \hline
$(5,7)$ & 1 & $(5, \ 0, \ 15)$ & 25 & Yes\\
%$(5,20771)$ & 1 & $(346564135, \ 0, \ 388999288)$ & 7025047002803 & Yes\\
$(11,6)$ & 1 & $(88, \ 55, \ 0)$ & 726  & Yes\\
$(13,170)$ & 1 & $(0, \  0, \ 0)$ & 0 & No\\
$(37,7)$ & 1 & $(629, \ 1110, \ 1221)$ & 0 & No\\
$(71,11)$ & 1 & $(3266, \ 0, \ 1633)$ & 287337 & Yes\\
$(5,3)$ & $-1$ & $(0, \ 5, \ 5)$ & 450 & Yes\\
$(5,11)$ & $-1$ & $(10, \ 15, \ 10)$ & 25 & Yes\\
$(5,23)$ & $-1$ & $(20, \ 15, \ 0)$ & 450 & Yes\\
$(5,26)$ & $-1$ & $(0, \ 0, \ 0)$ & 0 & No\\
$(5,93)$ & $-1$ & $(15, \ 0, \ 20)$ & 600 & Yes\\
$(11,3)$ & $-1$ & $(0, \ 0, \ 0)$ & 0 & No\\
$(31,115)$ & $-1$ & $(0, \ 0, \ 0)$ & 0 & No
\end{tabular}
\end{center}
\caption{Data for pairs $(p,b)$ and the monogenicity of $\G_{p,b}(x)$}
 \label{T:1}
\end{table}

\end{document}